\def\th@plain{%
%%  \let\thm@indent\noindent % no indent
%%  \thm@headfont{\bfseries}% heading font is bold
%%  \thm@notefont{}% same as heading font
%%  \thm@headpunct{.}% add period after heading
%%  \let\thm@swap\@gobble
%%  \thm@preskip\topsep
%%  \thm@postskip\theorempreskipamount
  \upshape %\itshape % body font
}
\renewenvironment{proof}[1][\proofname]{\par
  \pushQED{\qed}%
  \normalfont \topsep6\p@\@plus6\p@\relax
  \trivlist
  \item[\hskip\labelsep
        \bfseries
    #1\@addpunct{.}]\ignorespaces
}{%
  \popQED\endtrivlist\@endpefalse
}
\newtheorem{theorem}{Theorem}
\numberwithin{theorem}{section}
\newtheorem{lemma}{Lemma}
\newtheorem{conjecture}{Conjecture}
\newtheorem*{conjecture*}{Conjecture}
\numberwithin{claim}{theorem}
\theoremstyle{definition}
\newtheorem{remark}{Remark}
\newcommand{\etal}{et~al.\ }
\newcommand{\ie}{i.e.,\ }
\numberwithin{equation}{subsection}
\def\int(#1){\mathrm{int}(#1)}
\def\ext(#1){\mathrm{ext}(#1)}
\def\Int(#1){\mathrm{Int}(#1)}
\def\Ext(#1){\mathrm{Ext}(#1)}
\newcounter{listitem}
\newenvironment{listitem}%
{\refstepcounter{listitem}\vspace{5pt}\par\noindent
\textbf{(\thelistitem)}
\begin{upshape}\noindent}%
{\end{upshape}\vspace{5pt}\par}
\begin{document}
\title{Every planar graph without adjacent short cycles is $3$-colorable}
\author{%
Tao Wang\thanks{{\tt Corresponding
email: wangtaonk@yahoo.com.cn}}\\[.5em]
\small Institute of Applied Mathematics, \\
\small College of Mathematics and Information Science,\\
\small Henan University, Kaifeng, 475001, P. R. China
}
\date{}
\maketitle

\begin{abstract}%
Two cycles are {\em adjacent} if they have an edge in common. Suppose that $G$ is a planar graph, for any two adjacent cycles $C_{1}$ and $C_{2}$, we have $|C_{1}| + |C_{2}| \geq 11$, in particular, when $|C_{1}| = 5$, $|C_{2}| \geq 7$. We show that the graph $G$ is $3$-colorable.
\end{abstract}

\section{Introduction}
In 1852, Francis Guthrie proposed the Four Color Problem. In 1976,
K. Appel and W. Haken proved the Four Color Theorem:
\begin{theorem}
  Every planar graph is $4$-colorable.
\end{theorem}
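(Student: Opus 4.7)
The plan is to prove this by contradiction via the discharging method, following the strategy that reduces the Four Color Theorem to checking an \emph{unavoidable set of reducible configurations}. Assume $G$ is a minimum counterexample, i.e.\ a planar graph that is not $4$-colorable with $|V(G)|$ minimized. First I would establish standard reductions on the structure of $G$: $G$ may be taken to be a plane triangulation (adding edges only makes $4$-coloring harder), and every vertex must have degree at least $5$, since a vertex of degree at most $4$ admits the classical Kempe-chain recoloring argument to extend any $4$-coloring of $G - v$.

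Next I would set up the discharging framework. Assign to each vertex $v$ the initial charge $\deg(v) - 6$ and to each face $f$ the charge $2|f| - 6$; in a triangulation every face contributes $0$, and Euler's formula gives a total charge of $-12$. Since degree $5$ vertices carry charge $-1$ and all other vertices carry nonnegative charge, the existence of many degree $5$ vertices is forced. The heart of the argument is to design discharging rules (moving charge across edges from high-degree vertices to their low-degree neighbors) so that, after redistribution, any vertex or face with negative residual charge lies in one of a prescribed list $\mathcal{U}$ of local configurations. Thus any plane triangulation with minimum degree $5$ must contain some configuration from $\mathcal{U}$; this is the \emph{unavoidability} step.

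The \emph{reducibility} step is the main obstacle. For each configuration $K \in \mathcal{U}$, I must show that if the triangulation $G - K$ (or a contraction thereof) is $4$-colorable, then the coloring extends to $G$, contradicting the choice of $G$ as a minimum counterexample. This is done by enumerating all $4$-colorings of the boundary ring surrounding $K$, building the Kempe chain closure of the set of extendible colorings, and verifying that every ring coloring is either directly extendible or reachable via Kempe chain swaps from an extendible one (the distinction between $D$-reducibility and $C$-reducibility). Kempe's attempted proof foundered precisely here, on configurations where both Kempe chains one would like to switch intersect.

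The real difficulty, and what historically required computer assistance, is that no small $\mathcal{U}$ is known to work: Appel and Haken's discharging rules produced an unavoidable set of $1482$ configurations (later reduced by Robertson, Sanders, Seymour, and Thomas to $633$), each of which must be individually checked for reducibility by a finite but enormous case analysis on ring colorings. I would therefore expect the write-up to cite this mechanized verification rather than to reproduce it, and the nontrivial mathematical content of my proof would be the design of the discharging rules and the confirmation that the resulting configuration list is indeed unavoidable.
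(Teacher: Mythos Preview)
The paper does not actually prove this statement. Theorem~1.1 is the classical Four Color Theorem, and the paper merely quotes it as historical background (``In 1976, K.~Appel and W.~Haken proved the Four Color Theorem'') before moving on to its real subject, which is $3$-colorability of planar graphs in the class $\mathcal{G}_{(A)}$. There is therefore no proof in the paper to compare your proposal against.

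That said, your outline is an accurate high-level description of the Appel--Haken (and later Robertson--Sanders--Seymour--Thomas) strategy: set up a minimum counterexample as a triangulation with $\delta(G)\ge 5$, use Euler's formula and discharging to force an unavoidable set $\mathcal{U}$ of configurations, and then verify reducibility of each configuration via ring colorings and Kempe-chain closure. You are also right that the reducibility step is the bottleneck and that all known proofs outsource it to a computer check. As a self-contained write-up this is not a proof but a correct summary of the architecture of one; for the purposes of this paper, a one-line citation to Appel--Haken is all that is expected, and that is exactly what the paper provides.
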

In 1976, Garey \etal \cite{Garey1976} proved the problem of deciding
whether a planar graph is $3$-colorable is NP-complete. In 1959,
Gr\"{o}tzsch \cite{Grotzsch1959} showed that every planar graph
without $3$-cycles is $3$-colorable. In 1976, Steinberg conjectured
the following:
\begin{conjecture}[Steinberg's Conjecture]
  Every planar graph without $4$- and $5$-cycles is $3$-colorable.
\end{conjecture}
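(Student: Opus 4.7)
The plan is to argue by minimum counterexample and discharging, in the style standard for $3$-coloring theorems on planar graphs. Let $G$ be a planar graph satisfying the hypothesis with the fewest vertices that is not $3$-colorable, together with a fixed planar embedding. Standard reductions show $G$ is $2$-connected with minimum degree at least $3$, so every face is bounded by a cycle; Grötzsch's theorem then forces $G$ to contain a $3$-face.

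The first step is to translate the cycle-length condition into face-adjacency restrictions, since two faces sharing an edge have adjacent bounding cycles. The hypothesis forces every $3$-face to border only faces of length at least $8$, every $4$-face to border only faces of length at least $7$, every $5$-face to border only faces of length at least $7$ (using the ``in particular'' clause), and every $6$-face to border only faces of length at least $6$. The second step is to assemble a list of reducible configurations that therefore cannot occur in $G$: a $3$-face whose three vertices all have degree $3$, a $4$-face or $5$-face with too many consecutive degree-$3$ vertices, and various local patterns in which a degree-$3$ vertex is forced to lie on two short faces simultaneously. In each case one deletes or identifies a small vertex set, obtains a $3$-coloring of the smaller graph by minimality, and extends the coloring back to $G$ by a direct analysis of forbidden colors at each affected vertex. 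Pinning down the right list of reducible configurations and verifying extendability case by case is the main obstacle, as it requires careful enumeration of boundary colorings.

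Finally I would carry out the discharging. Assign every vertex $v$ the initial charge $d(v)-4$ and every face $f$ the initial charge $d(f)-4$, so by Euler's formula the total is $-8$. The only elements with negative initial charge are $3$-faces and degree-$3$ vertices, each carrying charge $-1$. The discharging rules would transfer charge from faces of length at least $5$ and from vertices of degree at least $5$ to the $3$-faces and degree-$3$ vertices, calibrated to the face-adjacency restrictions above: each $3$-face, flanked by three faces of length at least $8$ (charge at least $4$), draws what it needs from them, while each degree-$3$ vertex draws from its incident long faces or from neighbouring high-degree vertices. Using the absence of the reducible configurations from Step~2, one verifies that every vertex and face ends with nonnegative charge, contradicting the initial total of $-8$ and yielding the theorem. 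The delicate point in this last step is designing the rules so that no face of length $7$ (charge $3$), which may be forced to feed several incident $4$-faces, ends up with negative final charge.
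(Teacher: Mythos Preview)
The paper does not prove this statement at all: it is quoted as Steinberg's Conjecture and the very next sentence says ``This conjecture remains open.'' So there is no paper proof to compare against, and any purported proof must be judged on its own merits.

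Your proposal has a basic mismatch with the hypothesis. Steinberg's Conjecture assumes that $G$ has \emph{no} $4$-cycles and \emph{no} $5$-cycles whatsoever; it is not a nonadjacency condition. Consequently there are no $4$-faces or $5$-faces in $G$, and the sentences about ``every $4$-face to border only faces of length at least $7$'' and ``every $5$-face to border only faces of length at least $7$ (using the `in particular' clause)'' are vacuous --- and betray that you have transported the hypothesis of the paper's \emph{main theorem} (the nonadjacency class $\mathcal{G}_{(A)}$) onto the wrong statement. More damagingly, the claim that ``every $3$-face borders only faces of length at least $8$'' is simply false under Steinberg's hypothesis: a triangle sharing an edge with a $6$-face, a $7$-face, or another triangle-free long face creates no $4$- or $5$-cycle in general (the only forced restriction is that two triangles cannot share an edge, since that produces a $4$-cycle). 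With triangles allowed to sit next to $6$- and $7$-faces, the discharging scheme you sketch --- where each $3$-face is ``flanked by three faces of length at least $8$ (charge at least $4$)'' --- collapses immediately.

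Beyond this misreading, you should be aware that Steinberg's Conjecture was \emph{disproved} by Cohen-Addad, Hebdige, Kr\'a\v{l}, Li, and Salgado (2017), who constructed planar graphs with no $4$- or $5$-cycles that are not $3$-colorable. So no discharging argument, however refined, can succeed here; any attempt will necessarily contain either a false reducibility claim or a leaking discharging rule.
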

This conjecture remains open. In 1991, Erd\"{o}s suggested the
following relaxation of Steinberg's Conjecture by asking whether
there exists an integer $k$ such that the absence of cycles of
lengths from $4$ to $k$ in a planar graph guarantees its
$3$-colorability.

Abbott and Zhou \cite{Abbott1991} proved such an integer $k$ exists
and $k \leq 11$. The bound on integer $k$ was later improved to $10$
by Borodin \cite{Borodin1996b}, to $9$ by Borodin
\cite{Borodin1996a} and, independently, by Sanders and Zhao
\cite{Sanders1995}, to $8$ by Salavatipour \cite{Salavatipour2002},
to $7$ by Borodin \etal \cite{Borodin2005}.

Towards Steinberg's Conjecture, one direction is to show that planar
graph without adjacent short cycles is $3$-colorable, for instance,
the following result is such an attempt:
\begin{theorem}\label{Xu_BorodinTheorem}
Every planar graph without $5$- and $7$-cycles and without adjacent
triangles is $3$-colorable.
\end{theorem}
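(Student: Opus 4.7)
My plan is to argue by contradiction via a minimum counterexample combined with the discharging method. Let $G$ be a plane graph with no $5$-cycle, no $7$-cycle, and no two triangles sharing an edge, which is not $3$-colourable and has the fewest vertices among such graphs. Standard reductions give $\delta(G) \geq 3$ (a vertex of degree at most $2$ can be removed, the smaller graph $3$-coloured by minimality, and the colour extended), and short separating cycles are ruled out by splitting and recombining colourings across them. These preliminary reductions are routine.

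The first substantive task is to compile a catalogue of reducible local configurations. Natural targets include: no triangle contains more than one $3$-vertex; no edge on a $4$-face has both endpoints of degree $3$; certain configurations of a $3$-face together with a low-degree incident vertex and short adjacent faces cannot occur. Each is proved by deleting or identifying a small patch, $3$-colouring the resulting graph by minimality, and lifting the colouring back to $G$. The delicate step at each reduction is verifying that the surgery creates neither a $5$- nor a $7$-cycle nor a pair of adjacent triangles, since identifications in planar graphs can unexpectedly shorten cycles; this consistency check is where the no-$5$-cycle and no-$7$-cycle hypotheses feed back into the reducibility arguments.

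For the discharging, assign initial charge $\mu(x) = d(x) - 4$ to every vertex and face $x$; by Euler's formula $\sum_x \mu(x) = -8$. The only elements with negative initial charge are $3$-vertices and $3$-faces. I would redistribute using rules of the following general shape: every face of length at least $6$ sends a prescribed share to each incident $3$-vertex and to each adjacent $3$-face (across a common edge), and every vertex of degree at least $5$ contributes to each incident $3$-face. Crucially, the hypothesis that $G$ has no $5$- and no $7$-cycle and no adjacent triangles forces every face sharing an edge with a $3$-face to have length at least $6$, which is exactly what makes rules of this form feasible.

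The main obstacle will be the final charge count: one must check that after redistribution $\mu^*(x) \geq 0$ for every vertex and face, contradicting the total $-8$. The hardest local situation is a $3$-face whose three incident vertices all have degree $3$ or $4$, because then the face must draw its whole deficit of $-1$ from its three neighbouring faces; closing this case is precisely where the reducible configurations enter. Dually, one must bound the number of $3$-vertices and $3$-faces that a large face can be asked to support, so that no face of length at least $6$ is over-drained, and this bound again relies on the absence of $5$- and $7$-cycles. The intricate bookkeeping between reducibility and the balancing of these two directions of charge flow, rather than any single clever inequality, is where the argument succeeds or fails.
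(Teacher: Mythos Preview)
The paper does not contain a proof of this theorem. Theorem~\ref{Xu_BorodinTheorem} is stated in the introduction as a known result, attributed (after Xu's flawed attempt \cite{Xu2006}) to Borodin, Glebov, Montassier and Raspaud \cite{Borodin2009}; the present paper then moves on to prove its own, different, main result (\autoref{MainResult}). So there is no ``paper's own proof'' of this statement to compare your proposal against.

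That said, a few remarks on your outline are in order. Your proposal is a reasonable sketch of a direct discharging attack, but it is not the route taken in \cite{Borodin2009}. There, as in the present paper's proof of \autoref{MainResult}, the argument goes through a \emph{precolouring extension lemma}: one shows that for any plane graph in the class whose outer face has bounded length, every proper $3$-colouring of the outer boundary extends to the whole graph. The discharging is then carried out on a minimum counterexample to that extension statement, with the outer face carrying a large positive initial charge. This framework yields stronger structural control (no short separating cycles, no chords on the outer cycle, etc.) than a bare ``minimum non-$3$-colourable graph'' hypothesis, and it is essential for handling the reducible configurations cleanly. Your direct approach may well be workable, but the reducibility checks you flag as ``delicate'' are substantially harder without the extension-lemma machinery, and your proposal does not indicate how you would close them. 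In particular, your claim that every face adjacent to a $3$-face has length at least $6$ is morally correct (a $3$-face adjacent to a $4$-face along one edge produces a $5$-cycle, and adjacent to a $6$-face produces a $7$-cycle), but it requires ruling out degenerate overlaps and is exactly the sort of step that the extension-lemma framework makes tractable.
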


Note that the first attempt to prove this theorem was made by Xu
\cite{Xu2006}, but his proof was not correct. Borodin \etal gave a
new proof of \autoref{Xu_BorodinTheorem}, see \cite{Borodin2009}.

Recent progress are presented the service of theorems below.
\begin{theorem}[Borodin \etal \cite{Borodin2006}]
Every planar graph without triangles adjacent to cycles of length
from $3$ to $9$ is $3$-colorable.
\end{theorem}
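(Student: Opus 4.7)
The approach I would take is the discharging method applied to a minimum counterexample. Suppose $G$ is a planar graph satisfying the hypothesis (no triangle is adjacent to a cycle of length between $3$ and $9$) that is not $3$-colorable, chosen to minimize $|V(G)|+|E(G)|$. Then every proper subgraph of $G$ admits a proper $3$-coloring, so $G$ is $3$-critical. Standard criticality arguments give $\delta(G)\ge 3$ and imply that $G$ is $2$-connected; the remaining work in this preparatory step is to compile a list of forbidden local configurations (reducible configurations) that will provide enough structural leverage for the discharging. Each such reducibility claim is established by taking a proper $3$-coloring of $G$ minus a small piece and extending it back, possibly after a local recoloring.

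For the discharging, I would invoke Euler's formula with initial charges $\mu(v)=d(v)-4$ and $\mu(f)=\ell(f)-4$, whose total over $V(G)\cup F(G)$ equals $-8$. The key input from the hypothesis is that every face sharing an edge with a triangle must itself have length at least $10$, so each triangle is bordered by three very large faces. My principal rule is (R1): every face of length at least $10$ sends $\tfrac{1}{3}$ across each edge it shares with a $3$-face. Under (R1) a triangle collects $3\cdot\tfrac{1}{3}=1$ and finishes at charge $0$, while a $k$-face with $k\ge 10$ retains at least $(k-4)-\tfrac{k}{3}=\tfrac{2k-12}{3}\ge\tfrac{8}{3}$ even if every one of its edges is shared with a triangle. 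Faces of length $4$--$9$ never border a triangle, so they neither give nor receive under (R1) and keep their non-negative initial charge. Supplementary rules (R2), (R3) would transfer charge from large faces and from vertices of degree at least $5$ to incident $3$-vertices, so that every $3$-vertex also finishes at charge $0$; here the reducibility lemmas prevent $3$-vertices from clustering too densely on any short face.

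The hardest part will be the reducibility analysis: producing a rich enough catalogue of forbidden configurations so that the combined flow of charge from $10^+$-faces and high-degree vertices can simultaneously cover the demands of triangles and $3$-vertices without double-counting. Once those lemmas are in hand, verifying that every vertex and face has non-negative final charge reduces to a finite case check organized by face length and by vertex degree; summing the final charges then yields a non-negative total, contradicting the value $-8$ supplied by Euler's formula and completing the proof.
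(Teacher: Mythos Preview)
The paper does not contain a proof of this theorem: it is quoted from Borodin, Glebov, Jensen, and Raspaud~\cite{Borodin2006} as one of several prior results leading up to the paper's own Theorem~\ref{MainResult}. There is therefore no proof in the paper against which to compare your proposal.

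That said, your proposed route is not the one used either in~\cite{Borodin2006} or in the present paper for its own main result. Both argue via a \emph{color extension lemma}: one shows that, for any plane graph in the class whose outer face has bounded degree, every proper $3$-coloring of the outer boundary extends to the whole graph. Working with a minimum counterexample to that stronger statement yields far more structural information than bare $4$-criticality --- short cycles become non-separating and facial, chords of the outer cycle disappear, and one can perform identifications that rule out configurations such as tetrads of bad $3$-vertices. Your sketch, by contrast, relies only on $\delta(G)\ge 3$ and defers the entire treatment of $3$-vertices to unspecified rules (R2), (R3). This is exactly where the difficulty lies: a $3$-vertex incident with three $4$-faces, none adjacent to a triangle and none incident with a $5^{+}$-vertex, has no source of charge under your scheme, and ordinary criticality does not exclude such configurations. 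The color-extension framework is what supplies the missing reducibility lemmas, and without it your discharging cannot be completed.
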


\begin{theorem}[Borodin \etal \cite{Borodin2010}]
Every planar graph in which no $i$-cycle is adjacent to a $j$-cycle
whenever $3 \leq i \leq j \leq 7$ is $3$-colorable.
\end{theorem}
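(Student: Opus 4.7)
\medskip

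\noindent\textbf{Proof proposal.} The natural approach is a minimum counterexample argument combined with a discharging analysis on the faces of a planar embedding. Let $G$ be a counterexample minimizing $|V(G)|+|E(G)|$, embedded in the plane. I would first establish standard reductions: by minimality, $G$ is $2$-connected (so every face boundary is a closed walk and in fact a cycle when $\delta(G)\ge 3$); $\delta(G)\ge 3$, since a vertex of degree at most $2$ can be removed, the rest $3$-colored, and the coloring extended; and $G$ contains no separating cycle $C$ of length at most $5$, since the interior and exterior of $C$ would each be smaller instances and their $3$-colorings could be glued after a permutation of colors on $V(C)$. Together with the hypothesis, these reductions imply that short faces of $G$ (lengths $3$, $4$, $5$) sit in relatively ``rich'' neighborhoods of long faces, because the adjacency constraint forces a $k$-face ($k\le 5$) to share each of its edges with a face of length at least $11-k$ (and at least $7$ when $k=5$).

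Next I would enumerate a short list of reducible local configurations that $G$ cannot contain, each reduced by a standard contraction/identification trick plus a careful extension of a $3$-coloring of the smaller graph. Typical candidates are: a $3$-vertex incident with a $3$-face; a $4$-vertex incident with two $3$-faces; certain short paths of $3$-vertices lying on a common short face; and configurations involving $5$-faces all of whose boundary vertices have degree $3$. For each candidate I would argue, using the sparsity provided by the cycle-adjacency hypothesis, that the reduced graph inherits the hypothesis (this is the place where one must verify that removing/contracting an edge does not create a forbidden pair of adjacent short cycles), then invoke minimality.

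I would then set up discharging with initial charge $\mu(x)=d(x)-4$ for every vertex and face, so that Euler's formula gives $\sum_x \mu(x)=-8$. The rules would move charge from long faces (length $\ge 7$) toward the few short faces they are adjacent to, and from high-degree vertices toward short faces through their incidences. The cycle-adjacency hypothesis is what makes this feasible: a $3$-face can only border faces of length $\ge 8$ and hence receives a substantial contribution across each of its three edges; a $4$-face borders only faces of length $\ge 7$; and a $5$-face borders only faces of length $\ge 7$ on every edge. I would choose rates (for example, transferring a fixed fraction of $d(f)-4$ across each shared edge, plus smaller amounts to $3$-vertices incident with a long face) so that every short face, every $3$-vertex, and every long face end with charge $\ge 0$, contradicting the total $-8$.

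The main obstacle, as usual in this family of problems, will be the discharging case analysis rather than the reductions: I must simultaneously balance the charge left on a $7$-face (which carries only $+3$ and may lie next to several $3$- or $4$-faces and many $3$-vertices) against what $3$- and $5$-faces demand, and verify non-negativity face-by-face. The delicate case is likely to be a $7$-face that is adjacent to a $4$-face while also containing many incident $3$-vertices, and a $5$-face whose boundary consists entirely of $3$-vertices and which is adjacent only to $7$-faces. Handling these tight cases is exactly where the extra ``$|C_1|=5\Rightarrow |C_2|\ge 7$'' clause of the hypothesis is used, ruling out adjacent $5$-$6$ pairs that would otherwise break the discharging balance; once those configurations are excluded, the remaining bookkeeping should close.
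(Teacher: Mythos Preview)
This theorem is not proved in the paper at all: it is quoted as a result of Borodin, Montassier and Raspaud \cite{Borodin2010} in the introductory survey, and the paper's own work is the strictly weaker-hypothesis Theorem~\ref{MainResult} about the class $\mathcal{G}_{(A)}$. So there is no ``paper's own proof'' of this statement to compare your proposal against.

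Beyond that, your proposal shows signs of having drifted from the stated theorem to the paper's main theorem. You invoke ``the extra $|C_{1}|=5\Rightarrow |C_{2}|\ge 7$ clause of the hypothesis'', but that clause belongs to the abstract of the present paper and to $\mathcal{G}_{(A)}$, not to the Borodin~\etal hypothesis; under the latter, $5$-cycles are already forbidden to be adjacent to $5$-, $6$-, and $7$-cycles directly, so a $5$-face borders only faces of length~$\ge 8$, not~$\ge 7$ as you wrote. Likewise your ``delicate case'' of a $7$-face adjacent to a $4$-face cannot occur, since $4$--$7$ adjacency is excluded by the hypothesis. These are not fatal to a discharging approach, but they indicate that the face-neighborhood bookkeeping you sketched is calibrated to the wrong statement. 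If you intend to prove the Borodin~\etal theorem itself, you should consult \cite{Borodin2010}; the actual argument there (and in the present paper for the stronger result) proceeds via a precoloring-extension lemma for the outer face of bounded length, together with the absence of short separating cycles and of ``tetrad''-type reducible configurations, rather than the direct global discharging you outline.
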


\begin{conjecture}[Strong Bordeaux Conjecture \cite{Borodin2003}]
Every planar graph without $5$-cycles and without adjacent triangles
is $3$-colorable.
\end{conjecture}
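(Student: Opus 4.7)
The plan is to argue by contradiction via a minimum counterexample and discharging. Let $G$ be a planar graph with no $5$-cycles and no two $3$-cycles sharing an edge, and assume $G$ is a non-$3$-colorable graph of this type with the fewest vertices; fix a plane embedding. First I would perform the usual reductions, showing that $G$ is $2$-connected and $\delta(G)\geq 3$, since a cut vertex or a vertex of degree at most $2$ would allow a $3$-coloring of a strictly smaller graph to be extended. The hypotheses, combined with minimality, already rule out several auxiliary configurations: for instance, a $3$-face sharing an edge with a $4$-face forces a $5$-cycle on its outer walk, and a $4$-face meeting a $3$-face at a common edge incident to another triangle would create either a $5$-cycle or adjacent triangles.

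Next I would assemble a list of reducible configurations adapted to $3$-vertices and short faces: restricted neighborhoods of degree-$3$ vertices, paths of degree-$3$ vertices connecting triangles, small separating cycles, and pairs of triangles joined by short corridors of $4$- or $6$-faces. For each configuration the reduction deletes or identifies a controlled set of vertices to obtain a smaller graph $G'$; the crux is that $G'$ must still satisfy the hypotheses, meaning no identification creates a $5$-cycle or a pair of adjacent triangles. Once this is verified, minimality gives a $3$-coloring of $G'$, and a case analysis on the colors appearing on the interface extends it back to $G$, the contradiction.

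For the discharging step I would give every vertex $v$ initial charge $d(v)-4$ and every face $f$ initial charge $|f|-4$, so by Euler's formula the total is $-8$. Rules then ship charge from vertices of degree $\geq 4$ and faces of length $\geq 6$ to the only negatively charged elements, namely $3$-vertices and $3$-faces; a plausible starting point is that each $k$-face with $k\geq 6$ pays a uniform amount across each incident triangle while each $\geq 4$-vertex contributes in proportion to its excess degree. The goal is that every element ends non-negative, contradicting the total.

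The main obstacle, and the reason the Strong Bordeaux Conjecture remains open, is the absence of any hypothesis bounding the number of short cycles that may cluster around a single triangle. Unlike \autoref{Xu_BorodinTheorem} and the weakenings cited above, here one cannot exclude $6$-, $7$-, or $8$-cycles, so a single donor face can be adjacent to many $3$-faces at once and will quickly be overdrawn under any naive rule. I expect that overcoming this requires a genuinely new reducible configuration describing the interaction of triangles with $6$- and $8$-faces via chains of $3$-vertices, together with a discharging scheme that routes charge along these chains rather than across individual shared edges.
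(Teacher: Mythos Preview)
The statement you are addressing is listed in the paper as a \emph{conjecture}, not a theorem; the paper does not contain a proof of it, nor does it claim one. There is therefore no ``paper's own proof'' to compare against. What the paper actually proves is \autoref{MainResult}, a weaker statement with a much stronger nonadjacency hypothesis (encoded by the graph $G_{\mathcal{A}}$), and the Strong Bordeaux Conjecture is cited only as motivation.

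Your proposal is not a proof but an outline of the standard minimum-counterexample-plus-discharging template, followed by an explicit admission that the approach does not go through: you correctly identify that without forbidding cycles of lengths $6$, $7$, $8$ (or some adjacency restriction on them), a donor face can be surrounded by too many triangles and the discharging collapses. That diagnosis is accurate and is exactly why the conjecture is open. But an outline that ends with ``I expect that overcoming this requires a genuinely new reducible configuration'' is a research program, not a proof. The missing ingredient is precisely the one nobody has: a set of reducible configurations strong enough to absorb the excess negative charge when $6$-, $7$-, and $8$-faces are unrestricted. Until you can exhibit such configurations and verify both their reducibility (the smaller graph stays in the class and the coloring extends) and their unavoidability (discharging closes), there is no argument here.
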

\begin{conjecture}[Novosibirsk $3$-Color Conjecture, \cite{Borodin2006}]
Every planar graph without $3$-cycles adjacent to $3$-cycles or
$5$-cycles is $3$-colorable.
\end{conjecture}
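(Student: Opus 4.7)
The plan is a discharging argument against a minimum counterexample, in the spirit of the Borodin \etal results cited above but with considerably weaker premises on short-cycle adjacency.

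Let $G$ be a plane graph satisfying the hypothesis, not $3$-colorable, and of minimum order with these properties. Standard extremality arguments yield that $G$ is $2$-connected with minimum degree at least $3$. I would assign the initial charge $\mu(v)=2d(v)-6$ to each vertex and $\mu(f)=d(f)-6$ to each face, so that by Euler's formula $\sum \mu = -12$. Then $3$-vertices are neutral, and only $3$-, $4$-, and $5$-faces carry negative initial charge ($-3$, $-2$, $-1$ respectively). The objective is to redistribute charge, using only the two bans actually hypothesized (triangle--triangle and triangle--pentagon adjacencies), so that every face and vertex ends non-negative, contradicting the negative total.

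I would first assemble a list of reducible configurations. Because a $3$-face has no triangular or pentagonal neighbor, its three edge-shared faces each have length $4$ or at least $6$; this is essentially the only structural consequence of the hypothesis, and every reduction must draw on it. Using minimality of $G$, I would try to exclude local configurations such as a $3$-face with two adjacent degree-$3$ vertices, a $3$-face bordered on two sides by $4$-faces whose other edges meet further triangles, and similar arrangements, each justified by extending a $3$-coloring of $G$ minus a small piece back to $G$. Then I would set discharging rules of the form: each $(\ge 6)$-face sends a fixed amount across each edge it shares with a $3$- or $5$-face, and a smaller amount across edges shared with $4$-faces; each vertex of degree at least $4$ sends a fixed amount to each incident short face; $4$-faces receive charge from vertices and relay it toward adjacent triangles when possible. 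The final step is a face-by-face and vertex-by-vertex audit of the resulting charges.

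The decisive obstacle, and the reason this conjecture has remained open, is that the hypothesis places no restriction whatsoever on adjacencies among $4$-, $5$-, $6$-, and $7$-faces. A pentagon may be surrounded by other pentagons and $4$-faces; a triangle may neighbor a chain of $4$-faces whose other sides abut further triangles; and large clusters of deficit faces can accumulate with no long face nearby to pay for them. The Borodin \etal theorems cited above escape this difficulty by additionally forbidding triangles adjacent to cycles of lengths $3$ through $9$ (or through $7$), which eliminates precisely these clusters. I expect the real work to lie in producing a reducible-configuration catalog and discharging scheme strong enough to control such clusters using only the triangle--triangle and triangle--pentagon bans, and I do not see a priori which structural feature of the hypothesis would make this possible.
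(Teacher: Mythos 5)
This statement is the Novosibirsk $3$-Color Conjecture, which the paper records precisely as an \emph{open conjecture} (attributed to Borodin \etal): it offers no proof of it, and to date none is known. So there is no ``paper's own proof'' to compare against, and your proposal cannot be assessed as an alternative route to an established result.

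More importantly, your proposal is not a proof but an announced plan, and you yourself identify the fatal gap in the final paragraph: the hypothesis forbids only triangle--triangle and triangle--pentagon adjacencies, so nothing controls clusters of $4$-, $5$-, $6$-, and $7$-faces, and you concede you ``do not see a priori which structural feature of the hypothesis would make this possible.'' Concretely, none of the essential ingredients is supplied: no reducible configuration is actually proved reducible (each claimed exclusion ``justified by extending a $3$-coloring of $G$ minus a small piece back to $G$'' is asserted, not carried out, and such extensions are exactly where these arguments typically fail); no discharging rules are specified with actual values; and no verification that final charges are nonnegative is attempted. The charge bookkeeping you set up ($\mu(v)=2d(v)-6$, $\mu(f)=d(f)-6$) is standard and unobjectionable, but everything that would constitute the mathematical content is absent. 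If you want to work in the neighborhood of this conjecture, the realistic targets are the weaker adjacency conditions handled in the cited works and in the present paper's main theorem, where the forbidden adjacencies are strong enough to make every short face locally subsidized by a long one.
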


\section{Preliminaries}

In this paper, the graphs considered may contain multiple edges, but
no loops. The {\em neighborhood} of a vertex $v \in V(G)$, denoted
by $N_{G}(v)$, is the set of all the vertices adjacent to $v$, \ie
$N_{G}(v) = \{u \in V(G) \mid uv \in E(G)\}$. The {\em degree} of a
vertex $v$ in $G$, denoted by $\deg_{G}(v)$, is the number of its
neighbors in $G$, \ie $\deg_{G}(v) = |N_{G}(v)|$. A vertex of degree
$k$ is also referred as a {\em $k$-vertex}. Two cycles are {\em
adjacent} if they have an edge in common.

For a plane graph, the edges and vertices divide the plane into a
number of {\em faces}. The unbounded face is called the {\em outer
face}, and the others are called {\em inner faces}. The boundary of
the outer face of $G$ is called the {\em outer boundary} of $G$ and
denoted by  $C_{0}(G)$. If $C_{0}(G)$ is a cycle, then $C_{0}(G)$ is
called the {\em outer cycle} of $G$. We call a vertex $v$ of $G$ an
{\em outer vertex} of $G$ if $v$ is on $C_{0}(G)$; otherwise $v$ is
an {\em inner vertex} of $G$. Similarly we define an outer edge and
an inner edge of $G$. The {\em degree of a face} $F$ of $G$ is the
number of edges in its boundary, counting those edges twice for
which $F$ lies on both sides. A $k$-face is a face of degree $k$. A
face is said to be {\em incident} with vertices and edges in its
boundary, and two faces are {\em adjacent} if their boundaries have
an edge in common. A vertex is {\em bad} if it is an inner
$3$-vertex and is incident with a triangle. Let $C$ be a cycle of a
plane graph $G$. The cycle $C$ divides the plane into two regions,
the unbounded region is denoted by $\ext(C)$, and the other region
is denoted by $\int(C)$. If both $\int(C)$ and $\ext(C)$ contain at
least one vertex, then we say that the cycle $C$ is a {\em
separating cycle} of $G$. Let $u$ and $v$ be two vertices of a cycle
$C$ in $G$, the segment of $C$ clockwisely from $u$ to $v$ is
denoted by $C[u, v]$, and $C(u, v) = C[u, v] - \{u, v\}$.

A {\em nonadjacency} graph is one whose vertices are labeled by
integers greater than two and each integer appears at most once.
Given a graph $G_{\mathcal{A}}$ of nonadjacency, we say that a graph
$G$ belongs to $G_{\mathcal{A}}$ or $G$ has the nonadjacency
property $\mathcal{A}$ if no two cycles of lengths $i$ and $j$ are
adjacent in $G$ when the vertices labeled with $i$ and $j$ are
adjacent in $G_{\mathcal{A}}$.

\begin{figure}%
\centering
\includegraphics{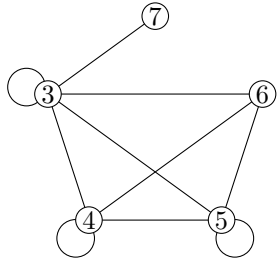}
\caption{The nonadjacency graph $G_{\mathcal{A}}$.}
\label{GA-1}
\end{figure}

Let $\mathcal{G}_{(A)}$ be the class of graphs belongs to the
nonadjacency graph depicted in \autoref{GA-1}. In this paper, we
prove the following.
\begin{theorem}\label{MainResult}
  Every planar graph in $\mathcal{G}_{(A)}$ is $3$-colorable.
\end{theorem}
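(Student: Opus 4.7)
The plan is to prove \autoref{MainResult} by contradiction via the standard combination of reducibility and discharging. Let $G$ be a counterexample with $|V(G)|+|E(G)|$ minimum. The nonadjacency condition first translates into explicit face-adjacency constraints: a $3$-face can only share an edge with a face of length at least $8$; a $4$-face only with one of length at least $7$; a $5$-face only with one of length at least $7$ (the special clause excludes adjacency to a $6$-face, and $5+5=10<11$ rules out another $5$-face); and a $6$-face only with one of length at least $6$.

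I would first collect structural properties of $G$. Standard arguments show that $G$ is $2$-connected and $\delta(G)\geq 3$. I would then rule out a list of reducible configurations involving bad vertices (inner $3$-vertices incident to a triangle) clustered near short faces and separating cycles of small length. The typical reduction identifies a small subgraph, deletes it, applies the minimality of $G$ to $3$-color what remains, and then shows that the coloring extends back to $G$, contradicting the choice of $G$.

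Next comes the discharging phase. By Euler's formula,
\[
\sum_{v\in V(G)}(\deg(v)-4) + \sum_{f\in F(G)}(\deg(f)-4) = -8.
\]
Assign initial charge $\mu(x)=\deg(x)-4$ to every vertex and every face. Only $3$-vertices and $3$-faces start with negative charge ($-1$ each); $4$-vertices and $4$-faces are neutral, $5$-faces carry $+1$, $6$-faces $+2$, and so on. I would design transfer rules that push charge from large faces to their small-face neighbors and to incident bad vertices, and from high-degree vertices to adjacent bad vertices. The objective is to show that after discharging every element has nonnegative charge, contradicting the total $-8$.

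The hard part will be matching supply against demand around $3$-faces and bad $3$-vertices. A $3$-face needs $+1$ but can only draw from the single $\geq 8$-face across each of its three edges, and that large face may simultaneously have to feed several other $3$-faces and bad vertices on its boundary. A $5$-face's $+1$ cannot flow to any adjacent $3$-, $4$-, $5$-, or $6$-face since none can occur, so it must be channelled through its vertices, which interacts delicately with the vertex rules. The $7$-face is the smallest face that can simultaneously border $3$- and $4$-faces, and distributing its $+3$ among such neighbors demands careful subcase analysis. I expect the bulk of the proof to consist of verifying, face by face and vertex by vertex, that these competing demands are always met.
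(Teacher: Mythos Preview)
Your outline follows the right paradigm (reducibility plus discharging), but it omits the device that actually makes the reductions go through. The paper does \emph{not} argue directly on a minimum non-$3$-colorable graph. Instead it proves a strictly stronger \emph{color extension lemma}: if $G\in\mathcal{G}_{(A)}$ is embedded with outer face $f_0$ of degree at most $11$, then every proper $3$-coloring of $G[V(f_0)]$ extends to $G$. The main theorem is then a two-line corollary (take any short cycle as the outer face). Working inside this extension framework buys one property you never mention and cannot easily obtain in your setup: a minimal counterexample has \emph{no separating cycle of length at most $11$}. Every $11^-$-cycle is facial. This is what licenses all of the identification reductions: the paper identifies opposite vertices of a $4$-face, of a $6$-face, and of certain paths near a tetrad, and in each case the absence of short separating cycles is exactly what guarantees that the identification does not destroy $C_0$, create chords of $C_0$, or produce forbidden adjacencies. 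Your ``typical reduction identifies a small subgraph, deletes it, \dots, and extends back'' is too vague here; without the extension-lemma framework you have no obvious way to eliminate $4$- and $6$-faces, and the discharging you sketch would have to absorb them (and the bad vertices they carry) directly.

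Two smaller points. First, in the paper's discharging the outer face is given charge $\deg(f_0)+4$, so the total charge is $0$, and the contradiction comes from showing every element is nonnegative while $f_0$ is strictly positive; outer $2$-vertices are allowed and handled by special rules. Your setup with total $-8$ and $\delta(G)\ge 3$ is the naive version and is not what is used. Second, your statement that ``the $7$-face is the smallest face that can simultaneously border $3$- and $4$-faces'' is wrong: $3+7=10<11$, so a $7$-face can never be adjacent to a triangle. The smallest face that can border a $3$-face is an $8$-face, and indeed the delicate part of the paper's discharging is the analysis of $8$- and $9$-faces.
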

\section{Proof of the main result}
In attempt to prove \autoref{MainResult}, we prove a strong color
extension lemma.
\begin{lemma}%
Suppose that $G$ is a plane graph in $\mathcal{G}_{(A)}$, and
$f_{0}$ is the outer face of $G$ with degree at most $11$, then
every proper $3$-coloring of $G[V(f_{0})]$ can be extend to a proper
$3$-coloring of $G$.
\end{lemma}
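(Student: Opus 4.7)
The plan is to argue by contradiction via the discharging method. Let $(G,\phi)$ be a counterexample chosen so that $|V(G)|+|E(G)|$ is minimum, where $\phi$ is a proper $3$-coloring of $G[V(f_0)]$ that does not extend to a proper $3$-coloring of $G$. I would first isolate structural properties of such a minimum counterexample, and then derive a contradiction from Euler's formula via discharging.

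\textbf{Structural reductions.} A standard sequence of reductions should give: the outer boundary $C_0$ is a cycle; $G$ has no chord of $C_0$ (otherwise split $G$ along the chord and extend to each piece by minimality); every inner vertex has degree at least $3$ (any inner vertex of degree at most $2$ may be deleted and its color recovered at the end); and $G$ has no separating cycle $C$ of length at most $11$ (for otherwise extend $\phi$ first to $\mathrm{Ext}(C)\cup C$ by minimality, then apply the lemma recursively with the resulting coloring of $C$ as the precoloring of the outer face of the interior piece). Further reducible configurations would rule out bad inner $3$-vertices incident to triangles and various short-cycle patterns beyond those already forbidden by $\mathcal{A}$; each is eliminated by contracting or deleting a small set of vertices, extending the $3$-coloring of the smaller graph obtained, and then recoloring back.

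\textbf{Discharging setup.} Assign initial charge $\mu(x)=\deg(x)-4$ to every vertex $x$ and every inner face $x$, and $\mu(f_0)=\deg(f_0)+4$. Euler's formula together with $\sum_v\deg(v)=\sum_f\deg(f)=2|E(G)|$ then gives $\sum_x\mu(x)=0$. The only elements with negative initial charge are $3$-vertices and $3$-faces (charge $-1$). The nonadjacency property $\mathcal{A}$ provides the crucial leverage: every face adjacent to a $3$-face has length at least $8$ (since $3+k\geq 11$); every face adjacent to a $4$-face or to a $5$-face has length at least $7$; no two triangles share an edge. I would design rules that ship charge from faces of length at least $6$ and from vertices of degree at least $4$ to the incident/adjacent triangles and to inner $3$-vertices, with a special subsidy flowing from $f_0$ outwards; the size bound $\deg(f_0)\leq 11$ makes its bonus $+4$ enough to absorb the deficit created by its own incident short cycles.

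\textbf{Main obstacle.} The hard part will be the final face-by-face and vertex-by-vertex verification that every element has non-negative final charge while some element (most likely $f_0$ or a large inner face) retains strictly positive charge, contradicting $\sum_x \mu^{*}(x)=0$. The tightest configurations to control are triangles all of whose face-neighbors have length exactly $8$, $5$-faces entirely surrounded by $7$-faces, and inner $3$-vertices lying on two faces of length $5$ or $6$ whose own neighbors are already charge-poor. Balancing these competing demands against the asymmetric constraint that a $5$-cycle must have every adjacent cycle of length at least $7$, and against the limited budget of a short outer face, will require a delicately tuned rule set, adapted from the discharging schemes developed in \cite{Borodin2006,Borodin2009,Borodin2010} for related partial-nonadjacency problems.
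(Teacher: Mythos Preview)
Your overall architecture---minimum counterexample, structural reductions, then discharging from Euler's formula with the charge assignment $\mu(v)=\deg(v)-4$, $\mu(f)=\deg(f)-4$, $\mu(f_0)=\deg(f_0)+4$---matches the paper exactly, as do the first few reductions (no chord of $C_0$, inner minimum degree $3$, no separating $11^-$-cycle). But the proposal has a real gap at the interface between the reductions and the discharging.

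First, your sentence ``Further reducible configurations would rule out bad inner $3$-vertices incident to triangles'' is wrong: a single inner $3$-vertex on a triangle is \emph{not} reducible (its three neighbours can use all three colours). In the paper such vertices persist; they are precisely the ``bad'' vertices around which the entire discharging analysis is organised. What the paper eliminates instead are three things you do not mention: (i) all inner $4$-faces and all inner $6$-faces, each killed by an identification of opposite boundary vertices (one must check this preserves membership in $\mathcal{G}_{(A)}$ and creates no chord of $C_0$); (ii) a ``tetrad'' of four consecutive bad vertices with both end-edges in triangles, together with a refinement (a tetrad-like configuration forces a $5^-$-face in a specific place); and (iii) for any inner $5$-face, a forced adjacent $7$-cycle through one of three specified edges. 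These three reductions are what make the discharging close; without them neither $8$-faces nor $9$-faces can be shown nonnegative.

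Second, your list of ``tightest configurations'' is off target. Triangles are fed by their \emph{vertices}, not by adjacent faces; $6$-faces do not occur; and the genuinely delicate cases in the paper are inner $8$- and $9$-faces carrying many bad vertices, where the absence of tetrads, the bound of at most four consecutive bad vertices, and the $5$-face/$7$-cycle lemma are all used simultaneously. A discharging scheme ``adapted from'' the cited papers will not balance here unless these specific reductions are in place.
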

\begin{proof}%
By way of contradiction, we assume that the result is not true. Let
$G$ be a counterexample to the Lemma with the following condition:
$|V(G)| + |E(G)|$ is minimum among all the counterexamples. Let
$C_{0}$ be the boundary of the outer face $f_{0}$. Then there exists
a proper 3-coloring of $G[V(f_0)]$ which cannot be extended to a
proper 3-coloring of $G$. Moreover, the minimum counterexample $G$
has the following properties.
\begin{listitem}%
The graph $G$ is simple, \ie it has no loops and no multiple edges.
\end{listitem}
\begin{listitem}\label{intC0}
$\int(C_{0})$ contains at least one vertex.
\end{listitem}
\begin{listitem}%
For every vertex $v$ in $\int(C_{0})$, the degree of $v$ in $G$ is at least three.
\end{listitem}
\begin{listitem}%
The graph $G$ is $2$-connected, and thus the boundary of each face is a cycle.
\end{listitem}

From now on, for any integer $i \geq 4$, $i^{-}$ denotes every
positive integer ranges from 3 to $i$ and $i^{+}$ denotes all the
positive integer greater than $i$.

\begin{listitem}\label{NoSeparatCycle}%
The graph $G$ has no separating cycles of length at most eleven. So
every $11^{-}$-cycle is a facial cycle.
\end{listitem}
\begin{listitem}\label{C0noChord}%
The outer cycle $C_{0}$ has no chords. For any inner face $f$ of
$G$, at least one vertex of the boundary of $f$ is not on $C_{0}$.
\end{listitem}
\begin{proof}%
Let $xy$ be a chord of the outer cycle $C_{0}$. By the minimality of
$G$, the $3$-coloring of $G[V(f_{0})]$ can be extend to a proper
$3$-coloring of $G - xy$. Obviously, it is also a proper
$3$-coloring of $G$.
\end{proof}
\begin{listitem}\label{AtMost2Neighbors}
If $C$ is a cycle of length at most $11$, then every vertex in
$\int(C)$ has at most two neighbors on $C$.
\end{listitem}
\begin{proof}%
If $v$ has three neighbors on the cycle $C$, then the vertex $v$ and
its three incident edges partition the cycle into three cycles.
According to the lengths of the smallest cycle, there are several
cases. If the smallest one is of length three, the other two are of
length at least eight as $G \in \mathcal{G}_{(A)}$, then $|C| \geq 3
+ 8 + 8 - 6= 13$, a contradiction. If the smallest one is of length
four, the other two are of length at least seven, then $|C| \geq 4 +
7 + 7 - 6 = 12$, a contradiction. If the smallest one is of length
five, the other two are of length at least seven, then $|C| \geq 5 +
7 + 7 - 6 = 13$, a contradiction. If the smallest one is of length
no less than six, then $|C| \geq 6 + 6 + 6 - 6=12$, a contradiction.
\end{proof}
\begin{listitem}\label{AtMost1Neighbor}%
If $C$ is a cycle of length at most $11$, then every vertex in
$\int(C)$ has at most one neighbor on $C$, except when $|C| = 11$
and the two neighbors on $C$ are consecutive.
\end{listitem}
\begin{proof}%
Suppose that there exists a vertex $v$ in $\int(C)$ such that it has
two neighbors $v_{1}$ and $v_{2}$ on the cycle $C$. By
(\ref{AtMost2Neighbors}), the vertices $v_{1}$ and $v_{2}$ are the
only two neighbors on $C$; and the path $v_{1}vv_{2}$ split the
cycle $C$ into two cycles $C_{1}=vC[v_{1}, v_{2}]v$ and
$C_{2}=vC[v_{2}, v_{1}]v$. Clearly, the vertex $v$ is in
$\int(C_{0})$, so $\deg_{G}(v) \geq 3$ and $v$ has at least one
neighbor in $\int(C)$. Then at least one of $C_i$ $(i=1, 2)$, say
$C_{1}$, is a separating cycle. It follows from
(\ref{NoSeparatCycle}) that $|C_{1}| \geq 12$. Hence $|C_{2}|=3$ and
$|C| = 11$.
\end{proof}

\begin{figure}[!htb]
\centering
\begin{minipage}[b]{.5\textwidth}
\centering
\includegraphics[width=0.7\textwidth]{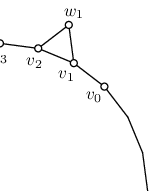}
 \caption{A local structure in (\ref{SimiTetrad})}\label{SimiTetrad-1}
\end{minipage}%
\begin{minipage}[b]{.5\textwidth}
\centering
\includegraphics[width=0.7\textwidth]{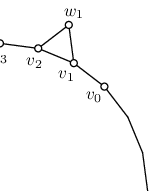}
\caption{A tetrad} \label{Tetrad-1}
\end{minipage}%
\end{figure}

\begin{listitem}\label{SimiTetrad}%
Let $f$ be a face with boundary $\partial(f) = v_{0}v_{1}v_{2} \dots
v_{l}v_{0}$. Assume that $v_{1}, v_{2}, \dots, v_{k}$ (where $k \geq
3$) are inner vertices consecutively on the boundary, and they are
all of degree three. If the edge $v_{1}v_{2}$ is in a triangle
$v_{1}w_{1}v_{2}v_{1}$ and the other neighbor of $v_{3}$ is $w_{2}$,
then the distance between $v_{0}$ and $w_{2}$ in the graph $G -
\{v_{1}, v_{2}, \dots, v_{k} \}$ is at most seven, and $k = 3$, see
\autoref{SimiTetrad-1}. Moreover, vertices $w_2, v_3, v_4$ are
consecutively on the boundary of a $5^{-}$-face.
\end{listitem}
\begin{proof}
Let $G^{*}$ be the graph obtained from $G$ by deleting vertices
$v_{1}, v_{2}, \dots, v_{k}$ and identifying vertex $v_{0}$ with
vertex $w_{2}$.

In the following proof, we will frequently use the fact that $G \in
\mathcal{G}_{(A)}$ and the triangle $v_{1}w_{1}v_{2}v_{1}$ is not
adjacent to any $7^{-}$-cycle.

First, we show that the distance between $v_{0}$ and $w_{2}$ in the
graph $G - \{v_{1}, v_{2}, \dots, v_{k}\}$ is at most seven. If the
distance is greater than seven, then the identification does not
create new cycles of length at most seven, and hence $G^{*} \in
\mathcal{G}_{(A)}$. Moreover, the cycle $C_{0}$ is also the outer
cycle of $G^{*}$, and the identification does not create chords of
$C_{0}$. By the minimality of $G$, the precoloring of $C_{0}$ can be
extend to a proper 3-coloring of $G^{*}$, and then a proper
3-coloring of $G$, a contradiction. So we may assume that the
distance between $v_{0}$ and $w_{2}$ in the graph $G - \{v_{1},
v_{2}, \dots, v_{k}\}$ is at most seven.

Let $P$ be a shortest path between $v_{0}$ and $w_{2}$ in the graph
$G - \{v_{1}, v_{2}, \dots, v_{k}\}$. It is easy to see that $w_{1}$
is not on the path $P$. If $v_{4}$ is not on the path $P$, then the
cycle $Pw_{2}v_{3}v_{2}v_{1}v_{0}$ is a cycle of length at most
eleven separating $w_{1}$ from $v_{4}$. Therefore, the vertex
$v_{4}$ is on the path $P$, and hence $k = 3$. The cycle $P[v_{0},
v_{4}]v_{3}v_{2}v_{1}v_{0}$ have a common edge with the triangle
$v_{1}w_{1}v_{2}v_{1}$, so $|P[v_{0}, v_{4}]| \geq 4$, and then
$|P[w_{2}, v_{4}]| \leq 3$. Therefore, $P[w_{2}, v_{4}]v_{3}w_{2}$
is a cycle of length at most five, by (\ref{NoSeparatCycle}), it
bounds an inner face of degree at most five.
\end{proof}
By (\ref{SimiTetrad}), if $v_{4}v_{3}w_{2}$ is also a $3$-cycle,
then $v_{4}$ is on $C_{0}$ or has degree at least four.

A {\em tetrad} is a local structure having four bad vertices $v_{1},
v_{2}, v_{3}$ and $v_{4}$ consecutively on the boundary of a face
(the degree of the face is at least six) with the edge $v_{1}v_{2}$
in a triangle and the edge $v_{3}v_{4}$ in a triangle (see
\autoref{Tetrad-1}).
\begin{listitem}\label{NoTetrad}
  The graph $G$ contains no tetrad.
\end{listitem}

It follows from (\ref{SimiTetrad}) and (\ref{NoTetrad}) that:
\begin{listitem}\label{No5consec}
 A face doesn't have five bad vertices consecutively on the boundary.
 \end{listitem}

\begin{listitem}%
The graph $G$ has no inner $4$-faces.
\end{listitem}
\begin{proof}%
Suppose that $f$ is an inner $4$-face, and the boundary of $f$ is a
$4$-cycle $\partial(f) =v_{1}v_{2}v_{3}v_{4}v_{1}$ (the $v_{i}$'s
appearing counterclockwise on $f$). Let $G^{*}$ be the graph
obtained from $G$ by identifying the vertices $v_{1}$ with $v_{3}$.

First, we show that the identification does not damage the outer
cycle $C_{0}$. Otherwise, both $v_{1}$ and $v_{3}$ are on the outer
cycle $C_{0}$, by (\ref{C0noChord}), one of $\{v_{2}, v_{4}\}$, say
$v_{2}$, is not on $C_{0}$. Then by (\ref{AtMost1Neighbor}), $v_{2}$
has two neighbors consecutive on $C_{0}$, that is, $v_{1}$ and
$v_{3}$ are adjacent in $G$, contradicting the fact that $4$-cycles
are chordless. Therefore, $C_{0}$ is also the outer cycle of
$G^{*}$.

Assume that the identification create a new chord of $C_{0}$.
Without loss of generality, assume that $v_{3}$ is on $C_{0}$, but
$v_{1}$ is not on $C_{0}$ and $v_{1}$ has a neighbor on $C_{0}$, say
$v$. Since the edge $v_{1}v_{2}$ is in the $4$-cycle
$v_{1}v_{2}v_{3}v_{4}v_{1}$, then it can not be contained in a
$3$-cycle, by (\ref{AtMost1Neighbor}), the vertex $v_{2}$ is in
$\int(C_{0})$. Similarly, the vertex $v_{4}$ is in $\int(C_{0})$.
The cycle $C_{0}[v, v_{3}]v_{4}v_{1}v$ is a separating cycle of $G$,
then $|C_{0}[v, v_{3}]| \geq 9$. Similarly, the cycle $C_{0}[v_{3},
v]v_{1}v_{2}v_{3}$ is a separating cycle of $G$, and $|C_{0}[v_{3},
v]| \geq 9$, then $|C_{0}| \geq 9 + 9 = 18$, a contradiction.

Let $C^{*}$ be an arbitrary new cycle of length at most seven
created by the identification. Then it corresponds to a
$v_{1}$-$v_{3}$ path $P = v_{1}x_{1} \dots x_{k}v_{3}$ in $G$, where
$k \leq 6$. If neither $v_{2}$ nor $v_{4}$ is on the path $P$, then
there must be a separating cycle of length at most nine, a
contradiction to (\ref{NoSeparatCycle}). Hence $v_{2}$ is on the
path $P$, without loss of generality, assume $v_{2} = x_{1}$. If $k
\leq 5$, the cycle $x_{1}x_2 \dots x_{k}v_{3}v_2$ is a cycle of
length at most six, it has a common edge $v_{1}v_{2}$ with the cycle
$v_{1}v_{2}v_{3}v_{4}v_{1}$ in $G$, which is impossible. So,
$|C^{*}| =7$ and $k=6$. Since the cycle $x_{1}x_2 \dots
x_{6}v_{3}v_2$ is not adjacent to any $3$-cycle in $G$ for $G \in
\mathcal{G}_{(A)}$, and hence it is not adjacent to any 3-cycle in
$G^{*}$. Similarly, edges $v_3v_4, v_4v_1, v_1v_2$ is not adjacent
to any 3-cycle in $G^{*}$, because they all lie in a 4-cycle of $G$
and $G \in \mathcal{G}_{(A)}$. Therefore, $C^{*}$ is not adjacent to
any 3-cycle in $G^{*}$ and hence $G^{*} \in \mathcal{G}_{(A)}$.

By the minimality of $G$, the precoloring of $C_{0}$ can be extend
to a proper $3$-coloring of $G^{*}$, which just corresponds to a
proper $3$-coloring of $G$, a contradiction.
\end{proof}

\begin{listitem}%
The graph $G$ has no inner $6$-faces.
\end{listitem}
\begin{proof}
Let $f$ be an inner $6$-face, with boundary a $6$-cycle $\partial(f)
= v_{1}v_{2}v_{3}v_{4}v_{5}v_{6}v_{1}$. Obviously, by
(\ref{C0noChord}), there exists at least one vertex of $\{v_{1},
v_{2}, v_{3}, v_{4}, v_{5}, v_{6}\}$, say $v_1$, is not on $C_{0}$.
By (\ref{AtMost1Neighbor}), either $v_{2}$ or $v_{6}$ is not on
$C_{0}$, we assume that $v_{2}$ is not on $C_{0}$. Let $G^{*}$ be
the graph obtained from $G$ by identifying the vertices $v_{1}$ with
$v_{5}$ and $v_{2}$ with $v_{4}$. Because neither $v_{1}$ nor
$v_{2}$ is on $C_{0}$, the cycle $C_{0}$ is also the outer cycle of
the graph $G^{*}$.

We show that the outer cycle $C_{0}$ has no chord in $G^{*}$.
Otherwise, we assume that there exists a chord in $G^{*}$, without
loss of generality, we assume that $v_{4}$ is on $C_{0}$ and $v_{2}$
is not on $C_{0}$ but it has a neighbor $v$ on $C_{0}$. Because the
edge $v_{2}v_{3}$ is contained in the $6$-cycle
$v_{1}v_{2}v_{3}v_{4}v_{5}v_{6}v_{1}$, it can not be contained in
any $3$-cycle, so $v_{3}$ is in $\int(C_{0})$. By the nonadjacency
condition, the cycle $C_{0}[v, v_{4}] v_{3}v_{2}v$ has length at
least six, and $|C_{0}[v, v_{4}]| \geq 3$. The cycle $C_{0}[v_{4},
v]v_{2}v_{3}v_{4}$ is a separating cycle, and $|C_{0}[v_{4}, v]|
\geq 9$. Hence $|C_{0}| \geq 3 + 9 = 12$, a contradiction.

We can also show that the identification does not make short cycles
of $G^{*}$ adjacent, that is to say, $G^{*} \in \mathcal{G}_{(A)}$.

Now $G^{*}$ is a graph having the nonadjacency property
$\mathcal{A}$ and $G^{*}$ is a smaller graph than $G$, then the
precoloring of $C_{0}$ can be extend to a proper $3$-coloring of
$G^{*}$, which obviously corresponds to a proper $3$-coloring of
$G$, a contradiction.
\end{proof}

\begin{listitem}\label{5Face1}
Suppose that $f$ is a $5$-face with boundary $\partial(f) =
v_{1}v_{2}v_{3}v_{4}v_{5}v_{1}$, and both $v_{1}$ and $v_{3}$ are on
the outer cycle $C_{0}$, then there exists a $7$-cycle $C'$ such
that $E(C') \cap \{v_{3}v_{4}, v_{4}v_{5}, v_{5}v_{1}\} \neq
\emptyset$.
\end{listitem}
\begin{proof}
As $5$-cycles are chordless, vertices $v_{1}$ and $v_{3}$ are not
adjacent in $G$. By (\ref{AtMost1Neighbor}), the vertex $v_{2}$ is
on the cycle $C_{0}$. By (\ref{C0noChord}), edges $v_{1}v_{2}$ and
$v_{2}v_{3}$ are consecutive on $C_{0}$. Hence the vertices $v_{4},
v_{5}$ are in $\int(C_{0})$. By (\ref{AtMost1Neighbor}), the vertex
$v_{4}$ has a neighbor distinct from $v_5$, in $\int(C_{0})$. So the
cycle $C_{0}[v_{3}, v_{1}]v_{5}v_{4}v_{3}$ is a separating cycle,
and then $|C_{0}[v_{3}, v_{1}]| \geq 9$. So, the cycle $C_{0}$ is a
$11$-cycle.

Let $C = v_{1}v_{2}v_{3}x_{4} \dots x_{k}v_{1}$ be a cycle of length
at most nine distinct from the $5$-cycle
$v_{1}v_{2}v_{3}v_{4}v_{5}v_{1}$.  Clearly, $C \neq C_{0}$, there
exists at least one vertex in $\ext(C)$. If one of $\{v_{4},
v_{5}\}$ is not on $C$, the cycle $C$ is a separating cycle of
length at most nine, a contradiction. Therefore, both $v_{4}$ and
$v_{5}$ are on the cycle $C$. The two vertices $v_{4}$ and $v_{5}$
divide the path $C[v_{3}, v_{1}]$ into three segments, at least one
of the three segments is a path with length more than one. By the
nonadjacency condition, this path is of length at least six. Hence
$|C| = 2 + |C[v_{3}, v_{1}]| \geq 2 + 2 + 6 = 10$, a contradiction.
Then every cycle of $G$ containing edge $v_{1}v_{2}$ and
$v_{2}v_{3}$ must be of length at least ten except the cycle
$v_{1}v_{2}v_{3}v_{4}v_{5}v_{1}$. That is, every path linking
$v_{1}$ and $v_{3}$ in $G - \{v_{2}\}$ is of length at least eight
except the path $v_{3}v_{4}v_{5}v_{1}$. \vskip 7pt Case 1: The
vertices $v_{1}$ and $v_{3}$ receives different colors in the
precoloring. \vskip 7pt Delete the vertex $v_{2}$ and its incident
edges $v_{1}v_{2}$ and $v_{2}v_{3}$, add a new edge $v_{1}v_{3}$, we
obtain a new graph $G^{*}$. Obviously, the precoloring of $C_{0}$
corresponds to a proper $3$-coloring of the outer cycle of $G^{*}$.

We next show that $G^{*} \in \mathcal{G}_{(A)}$. If there exist two
cycles $C_{1}$ and $C_{2}$ violates the nonadjacency condition in
$G^{*}$, then one of $\{C_{1}, C_{2}\}$, say $C_{1}$, must contain
the edge $v_{1}v_{3}$. Since the path linking $v_{1}$ and $v_{3}$ in
$G - \{v_{2}\}$ is of length at least eight except the path
$v_{3}v_{4}v_{5}v_{1}$, then $C_{1} = v_{1}v_{3}v_{4}v_{5}v_{1}$ and
the cycle $C_{2}$ does not contain the edge $v_{1}v_{3}$,
consequently, the cycle $C_{2}$ is a cycle of $G - \{v_{2}\}$. By
the violated condition in $G^{*}$, we have $E(C_{2}) \cap
\{v_{3}v_{4}, v_{4}v_{5}, v_{5}v_{1}\} \neq \emptyset$ and $|C_{2}|
\leq 6$. Therefore, the $5$-cycle $v_{1}v_{2}v_{3}v_{4}v_{5}v_{1}$
has a common edge with the cycle $C_{2}$ in $G$, a contradiction.
Hence, $G^{*}$ is a graph having the nonadjacency property
$\mathcal{A}$.

By the minimality of $G$, the precoloring of the outer cycle of
$G^{*}$ can be extend to a proper $3$-coloring of $G^{*}$, which
corresponds to a proper $3$-coloring of $G$, a contradiction. \vskip
7pt Case 2: The vertices $v_{1}$ and $v_{3}$ receive the same color
in the precoloring. \vskip 7pt Delete the vertex $v_{2}$ together
with its incident edges $v_{1}v_{2}$ and $v_{2}v_{3}$, and identify
the vertices $v_{1}$ with $v_{3}$, then we obtain a new graph
$G^{*}$. Obviously, the precoloring of $C_{0}$ corresponds to a
proper 3-coloring of the outer cycle of $G^{*}$.

If $G^{*} \in \mathcal{G}_{(A)}$, the proper 3-coloring of the outer
cycle of $G^{*}$ can be extended to a proper 3-coloring of $G^{*}$,
which corresponds a proper 3-coloring of $G$. So, $G^{*} \notin
\mathcal{G}_{(A)}$. In other words, the identification do violate
the nonadjacency condition. Then there exist two cycles $C_{1}$ and
$C_{2}$ of length at most seven which are adjacent in $G^{*}$. If
both $C_{1}$ and $C_{2}$ are cycles of $G$, this contradicts the
nonadjacency condition in $G$. Thus, there exists a path of length
at most seven linking $v_{1}$ and $v_{3}$ in the graph $G -
\{v_{2}\}$. It must be the path $v_{1}v_{5}v_{4}v_{3}$, because in
the graph $G - \{v_{2}\}$, the path linking $v_{1}$ and $v_{3}$ is
of length at least eight except the path $v_{3}v_{4}v_{5}v_{1}$; and
the other cycle $C'$ is a cycle of $G$ with length seven. Moreover,
$E(C') \cap \{v_{3}v_{4}, v_{4}v_{5}, v_{5}v_{1}\} \neq \emptyset$.

\end{proof}

\begin{listitem}\label{5Face2}
Suppose that $f$ is a $5$-face with boundary $\partial(f) =
v_{1}v_{2}v_{3}v_{4}v_{5}v_{1}$, and either $v_{1}$ or $v_{3}$ is
not on the outer cycle $C_{0}$, then there exists a $7$-cycle
$C^{*}$ such that $E(C^{*}) \cap \{v_{3}v_{4}, v_{4}v_{5},
v_{5}v_{1}\} \neq \emptyset$.
\end{listitem}
\begin{proof}
Without loss of generality, assume that $v_{1}$ is not on the outer
cycle $C_{0}$. Let $G^{*}$ be the graph obtained from $G$ by
identifying the vertices $v_{1}$ with $v_{3}$. Clearly, the
identification dose not damage the outer cycle $C_{0}$.

First, we show that the identification dose not create a chord of
$C_{0}$. Otherwise, the vertex $v_{1}$ has a neighbor $v$ on the
cycle $C_{0}$ and the vertex $v_{3}$ is on the outer cycle $C_{0}$.
By (\ref{AtMost1Neighbor}) and the nonadjacency condition, the
vertices $v_{2}$ and $v_{5}$ are in $\int(C_{0})$. Since the cycle
$C_{0}[v, v_{3}]v_{4}v_{5}v_{1}v$ is a separating cycle of $G$, then
$|C_{0}[v, v_{3}]| \geq 8$. Similarly, the cycle $C_{0}[v_{3},
v]v_{1}v_{5}v_{4}v_{3}$ is a separating cycle of $G$, and
$|C_{0}[v_{3}, v]| \geq 8$. Hence $|C_{0}| \geq 16$, a
contradiction. So the identification does not create a chord of
$C_{0}$, the precoloring of $C_{0}$ is also a proper $3$-coloring of
the outer cycle of $G^{*}$.

If $G^{*}$ is a graph having the nonadjacency property
$\mathcal{A}$, then the precoloring of $C_{0}$ can be extend to a
proper $3$-coloring of $G^{*}$, and the coloring corresponds to a
proper coloring of $G$, a contradiction. Then there exists two
cycles that violate the nonadjacency condition in $G^{*}$. Clearly,
one of them must the triangle $v^{*}v_{4}v_{5}$, where $v^{*}$ is
the vertex obtained by the identifying $v_{1}$ with $v_{3}$, and the
other cycle $C^{*}$ must be a cycle of $G$. By the nonadjacency in
$G$, the cycle $C^{*}$ is a cycle of $G$ with length seven, and it
has a common edge with the triangle $v^{*}v_{4}v_{5}$. That is,
there exists a cycle $C^{*}$ of length seven in the graph $G$ such
that $E(C^{*}) \cap \{v_{3}v_{4}, v_{4}v_{5}, v_{5}v_{1}\} =
\emptyset$.
\end{proof}

Finally, we use the discharging method to get a contradiction and
finish the proof of the lemma.

The Euler formula: for the plane graph $G$, $|V(G)| - |E(G)| +
|F(G)| = 2$, can be written as following:

$$\sum_{v \in V(G)}(\deg_{G}(v) - 4) + \sum_{f \in F(G)}(\deg(f) - 4) = - 8.$$

Initially, set the charge of every vertex $v \in V(G)$ by $w(v) =
\deg_{G}(v) - 4$, and the charge of every face $f \neq f_{0}$ by
$w(f) = \deg(f) - 4$ and $w(f_{0}) = \deg(f_{0}) + 4$. Obviously,
the total sum of the initial charges is zero, \ie $$\sum_{x \in V(G)
\cup F(G)} w(x) = 0.$$

The discharging rule:
\begin{enumerate}[(R1)]
\item Each inner $3$-face receives charge $1/3$ from each incident vertex.
\item If $\deg_{G}(v) = 5$, the vertex $v$ sends charge $1/15$ to each incident $7^{+}$-face.
\item If $\deg_{G}(v) \geq 6$, the vertex $v$ sends charge $1/3$ to each incident face.
\item For all the inner vertices $v$:
\begin{enumerate}[(a)]
  \item If $\deg_{G}(v) = 3$ and $v$ is incident with a $3$-face, then $v$ receives charge $2/3$ from each incident non-triangular face;
  \item If $\deg_{G}(v) = 3$ and $v$ is incident with a $5$-face, then $v$ receives charge $1/5$ from the $5$-face and receives charge $2/5$ from each non-$5$-face;
  \item If $\deg_{G}(v) = 3$ and $v$ is not incident with $3$- or $5$-faces, then $v$ receives charge $1/3$ from each incident face;
  \item If $\deg_{G}(v) = 4$ and $v$ is incident with exactly one $3$-face, but not incident to any $5$-face, then $v$ receives charge $1/3$ from the incident face non-adjacent to the $3$-face;
    \item If $\deg_{G}(v) = 4$ and $v$ is incident with only one $3$-face, $v$ is incident to a $5$-face, then $v$ receives charge $1/15$ from the incident face adjacent to the $3$-face, receives charge $1/5$ from the $5$-face;
  \item If $\deg_{G}(v) = 4$ and $v$ is incident with two $3$-face, then $v$ receives charge $1/3$ from each incident non-triangular face;
  %\item If $\deg_{G}(v) = 4$ and $v$ is not incident with $3$-face, but it is incident with $5$-faces, then $v$ sends $1/15$ to each incident $\geq 5$-face.
  \item If $\deg_{G}(v) = 4$ and $v$ is not incident with $3$-face, but it is incident with $5$-faces, then $v$ receives charge $1/5$ from each $5$-face and sends charge $1/15$ to each incident $5^{+}$-face.
\end{enumerate}
\item For all the outer vertices $v$:
\begin{enumerate}[(a)]
  \item if $\deg(v) = 2$ and $v$ is incident with an inner $5$-face, then $v$ receives charge $3/5$ from the inner $5$-face and receives charge $7/5$from the outer face;
  \item if $\deg(v) = 2$ and $v$ is incident with an inner face having degree at least seven, then $v$ receives charge $2/3$ from the inner face and receives charge $4/3$ from the outer face;
  \item if $\deg(v) = 3$, then the vertex $v$ receives charge $4/3$ from the outer face;
  \item if $\deg(v) = 4$, then the vertex $v$ receives charge $2/3$ from the outer face;
\end{enumerate}

\end{enumerate}

\begin{figure}[!htb]
\centering
  \begin{minipage}[b]{0.3\textwidth}
   \centering
   \subfigure[The discharging rule (R1)]{\includegraphics[width=1in]{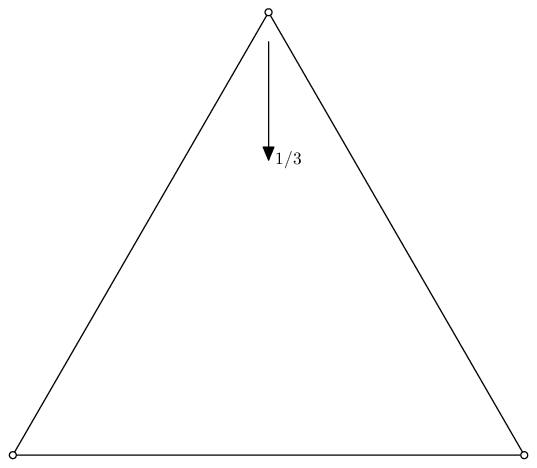}}
  \end{minipage}%
    \hspace{0.04\textwidth}%
  \begin{minipage}[b]{0.3\textwidth}
   \centering
   \subfigure[The discharging rule (R4a)]{\includegraphics[width=1in]{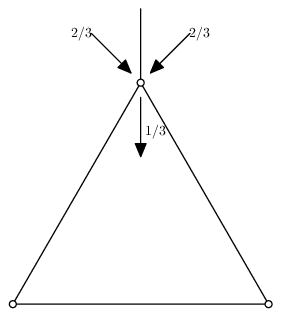}}
  \end{minipage}%
   \hspace{0.04\textwidth}%
  \begin{minipage}[b]{0.3\textwidth}
   \centering
   \subfigure[The discharging rule (R4b)]{\includegraphics[width=1in]{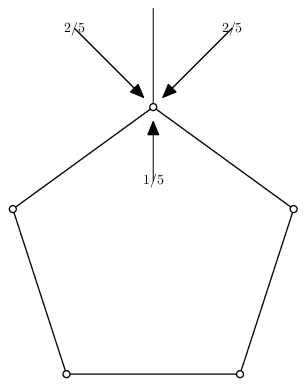}}
  \end{minipage}\\[20pt]
  \begin{minipage}[b]{0.3\textwidth}
   \centering
   \subfigure[The discharging rule (R4d)]{\includegraphics[width=1in]{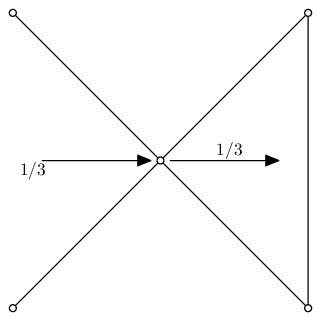}}
  \end{minipage}%
   \hspace{0.04\textwidth}%
  \begin{minipage}[b]{0.3\textwidth}
   \centering
   \subfigure[The discharging rule (R4e)]{\includegraphics[width=1in]{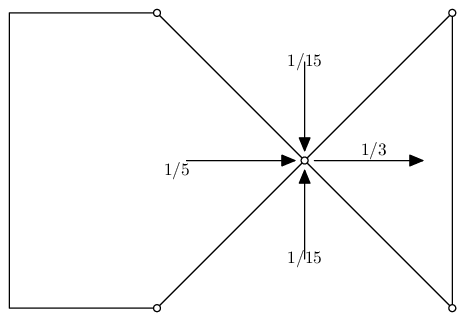}}
  \end{minipage}%
   \hspace{0.04\textwidth}%
  \begin{minipage}[b]{0.3\textwidth}
   \centering
   \subfigure[The discharging rule (R4f)]{\includegraphics[width=1in]{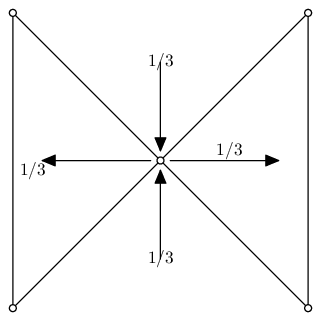}}
  \end{minipage}\\[20pt]
  \begin{minipage}[b]{0.3\textwidth}
   \centering
   \subfigure[The discharging rule (R4g)]{\includegraphics[width=1in]{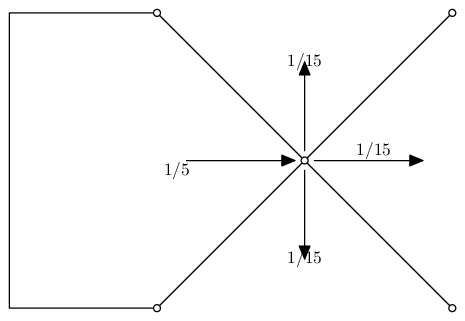}}
  \end{minipage}%
   \hspace{0.04\textwidth}%
  \begin{minipage}[b]{0.3\textwidth}
   \centering
   \subfigure[The discharging rule (R5a)]{\includegraphics[width=1in]{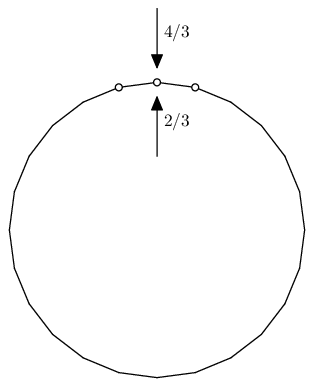}}
  \end{minipage}%
   \hspace{0.04\textwidth}%
  \begin{minipage}[b]{0.3\textwidth}
   \centering
   \subfigure[The discharging rule (R5b)]{\includegraphics[width=1in]{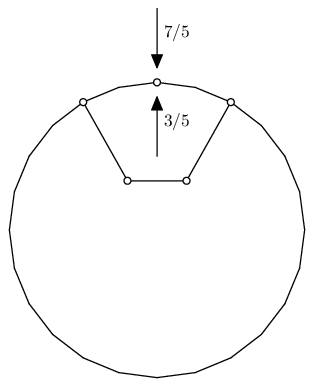}}
  \end{minipage}\\[20pt]
  \caption{The discharging process.}
\end{figure}

\begin{listitem}
  After the discharging process, all the vertices have nonnegative final charges.
\end{listitem}

\begin{figure}%
\centering
\includegraphics{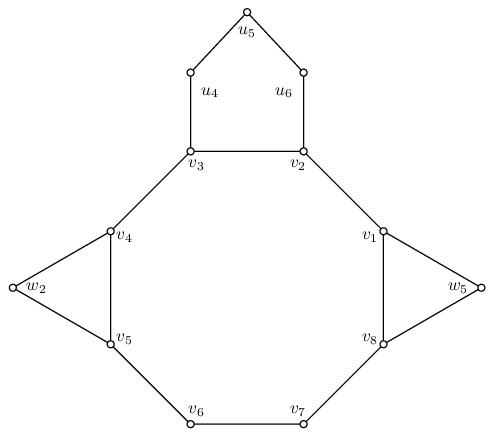}
\caption{A big face is incident with a $5$-face.}
\label{FiveFace}
\end{figure}

\begin{remark}
By the discharging rule, if a face $f$ sends charge $2/5$ to its
incident vertex $v_{3}$, then the vertex $v_{3}$ has degree three,
and it is incident with a $5$-face, see \autoref{FiveFace}. If
$\deg_{G}(v_{2}) \geq 4$, then the face $f$ sends to the vertex
$v_{2}$ at most $1/15$. If $\deg_{G}(v_{2}) = 3$, the face $f$ sends
charge $2/5$ to the vertex $v_{2}$, and then it follows from
(\ref{5Face1}, \ref{5Face2}) and the fact $G \in \mathcal{G}_{(A)}$
that either $v_{4}$ or $v_{1}$ is not bad; note that in this case
three non-bad vertices are consecutively on the face boundary.
\end{remark}
\begin{listitem}
For all the face $f$, the final charge of $f$ is nonnegative.
Moreover, the final charge of the outer face is positive.
\end{listitem}
\begin{proof}
Consider the outer face $f_{0}$. Assume that there are $l$ outer
vertices receiving charge $7/5$ from the outer face. Obviously, $l
\leq 5$. Therefore, the final charge of $f_{0}$ is at least
$\deg(f_{0}) + 4 - \frac{7}{5}l - \frac{4}{3}(\deg(f_{0}) - l) =
-\frac{1}{3}\deg(f_{0}) + 4 - \frac{1}{15}l > 0$.

If $f$ is an inner $3$-face, then the final charge of $f$ is at
least $3 - 4 + 3 \times \frac{1}{3}  = 0$.

If $f$ is an inner $5$-face, and the boundary of $f$ contains a
$2$-vertex, then the face sends nothing to two incident vertices,
see Fig. 3(i), the final charge of $f$ is at least $5 - 4 -
\frac{3}{5} - 2 \times \frac{1}{5} = 0$.

If $f$ is an inner $5$-face, and the boundary of $f$ contains no
$2$-vertices, then the final charge of $f$ is at least $5 - 4 - 5
\times \frac{1}{5} = 0$.

Let $f$ be an inner $7$-face. By (\ref{AtMost1Neighbor}) and the
hypothesis that $3$-cycles are not adjacent to $7$-cycles, the
boundary of $f$ contains at least two vertices in $\int(C_{0})$, and
the face $f$ send to each such vertex by at most $2/5$.

If $f$ is an inner $7$-face which is not incident with a $2$-vertex,
then the final charge of $f$ is at least $7 - 4 - 7 \times
\frac{2}{5} = \frac{1}{5} > 0$; if $f$ is an inner $7$-face which is
incident with a $2$-vertex, then $f$ sends nothing to at least two
vertices on $C_{0}$, and hence the final charge of $f$ is at least
$7 - 4 - 3 \times \frac{2}{3} - 2 \times \frac{2}{5} = \frac{1}{5} >
0$.

Let $f$ be an inner face with degree at least eight. If the face $f$
is incident with a $2$-vertex, it sends nothing to at least two
vertices on $C_{0}$. Thus the final charge of $f$ is at least
$\deg(f) - 4 - \frac{2}{3} (\deg(f) - 2) \geq 0$. Now we assume that
the boundary of an arbitrary inner face with degree at least eight
contains no $2$-vertices. Hence if a face sends a $2/3$ to its
incident vertex, the vertex must be an inner bad vertex.

Let $f$ be an inner face with degree at least ten. It contains at
most $\deg(f) - 2$ bad vertices by (\ref{No5consec}). If the face
$f$ does not send $2/5$ to its incident vertex, then the final
charge of $f$ is at least $\deg(f) - 4 - \frac{2}{3} (\deg(f) - 2) -
2 \times \frac{1}{3} \geq 0$. If the face $f$ send $2/5$ to its
incident vertex, then there are at most $\deg(v) - 3$ bad vertices
on the boundary by (\ref{SimiTetrad}), the final charge of $f$ is at
least $\deg(f) - 4 - \frac{2}{3}(\deg(v) - 3) - 3 \times
\frac{2}{5}\geq \frac{2}{15} > 0$.

Then we only have to consider the inner $8$-faces and $9$-faces.

Let $f$ be an inner $9$-face. By (\ref{No5consec}), the boundary of
face $f$ contains at most seven bad vertices. If the boundary of $f$
contains seven bad vertices, then the other two vertices separate
the seven bad vertices as $4 + 3$ by (\ref{No5consec}), and the four
bad vertices does not form a tetrad by (\ref{NoTetrad}). The local
structure must be as in \autoref{FourPlusThree-1}, and then the
final charge of $f$ is at least $9 - 4 - 7 \times \frac{2}{3} -
\frac{1}{3} = 0$.

  \begin{figure}[!htb]
   \centering
   \includegraphics{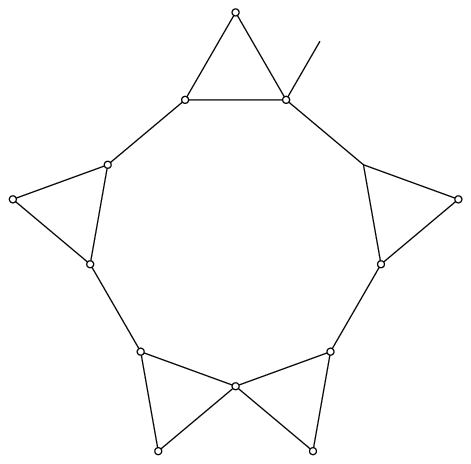}\\
   \caption{}
   \label{FourPlusThree-1}
  \end{figure}

If the boundary of $f$ contains six bad vertices and $f$ does not
send $2/5$ to its incident vertices, then the final charge of $f$ is
at least $9 - 4 - 6 \times \frac{2}{3} - 3 \times \frac{1}{3} = 0$.
If the boundary of $f$ contains six bad vertices and $f$ sends
charge $2/5$ to a vertex, then the final charge of $f$ is at least
$9 - 4 - 6 \times \frac{2}{3} - \frac{2}{5} - \frac{1}{3} -
\frac{1}{15} > 0$ by Remark 1 and (\ref{SimiTetrad}). If the
boundary of $f$ contains at most five bad vertices, then the final
charge of $f$ is at least $9 - 4 - 5 \times \frac{2}{3} - 4 \times
\frac{2}{5} > 0$. \vskip 10pt

Finally, we dealt with the $8$-face $f$. By (\ref{No5consec}), the
boundary of $f$ contains at most six bad vertices.

If the boundary of $f$ contains six bad vertices, then the other two
vertices separate the six bad vertices as $4 + 2$ or $3 + 3$ by
(\ref{No5consec}). Note that these two non-bad vertices are not
consecutively, so the face doesn't sends $2/5$ to the two non-bad
vertices.

\begin{enumerate}[(i)]
\item The six bad vertices are separated by the other two vertices
into two segments, where one contains four bad vertices and the
other contains two bad vertices.

The four bad vertices $v_{1}, v_{2}, v_{3}, v_{4}$ does not form a
tetrad, then $v_{1}v_{8}$ and $v_{4}v_{5}$ are in triangles. If
$v_{6}v_{7}$ is in a triangular face, then $f$ will send nothing to
the vertices $v_{5}$ and $v_{8}$, its final charge is at least $8 -
4 - 6 \times \frac{2}{3} = 0$. If $v_{5}v_{6}, v_{7}v_{8}$ are
respectively in a triangular face, the face $f$ is a two-ear face,
see \autoref{TwoEarFace}, a contradiction(the detail is leaving for
the reader, you can also see \cite{Borodin2005}).
\begin{figure}[!htb]
\centering
\begin{minipage}[b]{.5\textwidth}
\centering
\includegraphics[width=0.7\textwidth]{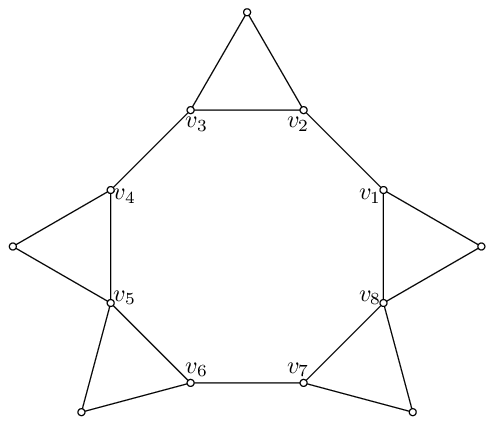}
 \caption{Two ear face}\label{TwoEarFace}
\end{minipage}%
\begin{minipage}[b]{.5\textwidth}
\centering
\includegraphics[width=0.7\textwidth]{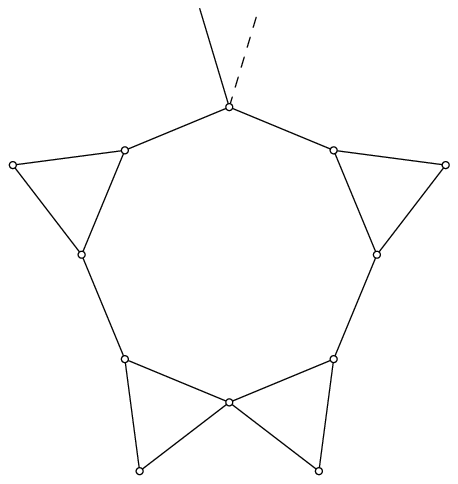}
\caption{One ear face} \label{OneEarFace}
\end{minipage}%
\end{figure}

\item The six bad vertices are separated by the other two vertices into two segments, each of which contains three bad vertices.

It is not too hard to see that the local structure is a one-ear face, see \autoref{OneEarFace}, a contradiction.
\end{enumerate}

Suppose now the boundary of $f$ contains five bad vertices. First,
assume that $f$ sends  $2/5$ to its incident vertex $v_{3}$, see
\autoref{FiveFace}. There exists a vertex $v_{2}$ on $\partial(f)$,
such that $v_{3}v_{2}$ is in a $5$-cycle. If $\deg(v_{2}) \geq 5$,
then the face $f$ receives from the vertex $v_{2}$ at least $1/15$.
Hence the final charge of $f$ is at least $8 - 4 - 5 \times 2/3 -
2/5 + 1/15 - 1/3 = 0$. If $\deg(v_{2}) = 3$, then the three non-bad
vertices are consecutively on the boundary by Remark 1, and the five
bad vertices lie consecutively on the boundary, a contradiction to
(\ref{No5consec}).

Then we assume that $\deg_{G}(v_{2}) = 4$. By (\ref{No5consec}),
vertices $v_1, v_{4}$ are bad and the edge $v_{4}v_{5}$ is in a
triangle. Then, it follows from (\ref{5Face1}, \ref{5Face2}) that
the edge $v_{1}v_{8}$ is in a triangle. By (\ref{SimiTetrad}), the
non-bad vertex is one of $\{v_{6}, v_{7}\}$. But the edge
$v_{6}v_{7}$ is in a triangle, so the non-bad vertex is of degree at
least four. By the discharging rule, the face $f$ sends nothing to
the non-bad vertex. Hence the final charge of the face is at least
$8 - 4 - 5\times 2/3 - 2/5 > 0$.

Then assume $f$ does not send charge $2/5$ to its incident vertices.
If $f$ sends nothing to at least one vertex, the final charge of the
face is at least $8 - 4 - 5 \times 2/3 -2 \times 1/3 >0 $. If not,
by the discharging rules, the bad vertices are paired linked by the
edges in the triangle, a contradiction to the fact that $5$ is odd.

In the end, we may assume the boundary of $f$ contains four bad
vertices, because the final charge of $f$ is no less than $8-4-3
\times 2/3 -5 \times 2/5 =0$ if $f$ contains at most three bad
vertices. If $f$ does not send charge $2/5$ to its incident
vertices, then the final charge of $f$ is at least $8 - 4 - 4 \times
2/3 - 4 \times 1/3 = 0$. Then we assume that $f$ does sends charge
$2/5$ to its incident vertex $v_{3}$, and the edge $v_{2}v_{3}$ is
in a $5$-face. If there exists a non-bad vertex receiving from $f$
at most $1/15$, then the final charge of $f$ is at least $8 - 4 - 4
\times 2/3 - 1/15 - 3 \times 2/5 > 0$. Then we assume that every
non-bad vertex receives charge from $f$ greater than $1/15$, then
$\deg_{G}(v_{2}) = 3$, or $v_2$ receives charge no more than $1/15$
from $f$. By (\ref{SimiTetrad}), one of $\{v_4, v_5\}$ is a non-bad
vertex, and one of $\{v_8, v_1\}$ is a non-bad vertex. By
(\ref{5Face1}, \ref{5Face2}), without loss of generality, we assume
that the $v_{3}v_{4}$ is in a $7$-face, then $v_{4}$ is not bad and
$v_{5}$ is bad. Moreover, $v_5v_6$ is in a triangular. Face $f$
sends charge great than $1/15$ to $v_{4}$, by the discharging rule,
$v_{4}$ is of degree three, but this contradicts (\ref{SimiTetrad}).
\end{proof}
We complete the proof of the color extension lemma.
\end{proof}
\begin{proof}[Proof of Theorem \ref{MainResult}]
Suppose the theorem is not correct. Let $G$ be a minimum
counterexample. Then $G$ is simple, 2-connected, and with girth less
than six. Hence, it must has a cycle $C_{0}$ of length less than
six. If $C_{0}$ is an outer cycle of $G$, a contradiction to the
extension Lemma. If $C_{0}$ is a separating cycle, we can first
color the cycle $C_{0}$, and thus extend the coloring to
$\int(C_{0})$ and $\ext(C_{0})$, and yields a proper $3$-coloring of
$G$, a contradiction. If $C_{0}$ is a inner facial cycle, then we
can redraw the graph $G$, such that $C_{0}$ is the outer cycle, then
apply the extension Lemma, a contradiction.
\end{proof}

\section*{Acknowledgment}
The author is indebted to Dr. Zefang Wu and Prof. Qinglin Yu for
their  careful  reading of the manuscript.


\begin{thebibliography}{10}

\bibitem{Abbott1991}
H.~L. Abbott and B.~Zhou, On small faces in $4$-critical planar graphs, Ars
  Combin. 32 (1991) 203--207.

\bibitem{Borodin1996a}
O.~V. Borodin, Structural properties of plane graphs without adjacent triangles
  and an application to 3-colorings, J. Graph Theory 21 (1996)~(2) 183--186.

\bibitem{Borodin1996b}
O.~V. Borodin, To the paper: ``on small faces in $4$-critical planar graphs''
  [{Ars Combin.} 32 (1991), 203--207] by {H. L. Abbott and B. Zhou}, Ars
  Combin. 43 (1996) 191--192.

\bibitem{Borodin2006}
O.~V. Borodin, A.~N. Glebov, T.~R. Jensen and A.~Raspaud, Planar graphs without
  triangles adjacent to cycles of length from $3$ to $9$ are $3$-colorable,
  Sib. \`{E}lektron. Mat. Izv. 3 (2006) 428--440.

\bibitem{Borodin2009}
O.~V. Borodin, A.~N. Glebov, M.~Montassier and A.~Raspaud, Planar graphs
  without $5$- and $7$-cycles and without adjacent triangles are $3$-colorable,
  J. Combin. Theory Ser. B 99 (2009)~(4) 668--673.

\bibitem{Borodin2005}
O.~V. Borodin, A.~N. Glebov, A.~Raspaud and M.~R. Salavatipour, Planar graphs
  without cycles of length from $4$ to $7$ are $3$-colorable, J. Combin. Theory
  Ser. B 93 (2005)~(2) 303--311.

\bibitem{Borodin2010}
O.~V. Borodin, M.~Montassier and A.~Raspaud, Planar graphs without adjacent
  cycles of length at most seven are $3$-colorable, Discrete Math. 310
  (2010)~(1) 167--173.

\bibitem{Borodin2003}
O.~V. Borodin and A.~Raspaud, A sufficient condition for planar graphs to be
  $3$-colorable, J. Combin. Theory Ser. B 88 (2003)~(1) 17--27.

\bibitem{Garey1976}
M.~R. Garey, D.~S. Johnson and L.~Stockmeyer, Some simplified {NP}-complete
  graph problems, Theoretical Computer Science 1 (1976)~(3) 237--267.

\bibitem{Grotzsch1959}
H.~Gr\"{o}tzsch, Ein dreifarbenzatz fur dreikreisfreie netze auf der kugel,
  Math.-Natur. Reihe 8 (1959) 109--120.

\bibitem{Salavatipour2002}
M.~R. Salavatipour, The three color problem for planar graphs, Tech. Rep.
  CSRG-458, Department of Computer Science, University of Toronto (2002).

\bibitem{Sanders1995}
D.~P. Sanders and Y.~Zhao, A note on the three color problem, Graphs Combin. 11
  (1995)~(1) 91--94.

\bibitem{Xu2006}
B.~Xu, On $3$-colorable plane graphs without $5$- and $7$-cycles, J. Combin.
  Theory Ser. B 96 (2006)~(6) 958--963.

\end{thebibliography}
\end{document}